\newcolumntype{L}[1]{>{\raggedright\arraybackslash}p{#1}}
\newcolumntype{C}[1]{>{\centering\arraybackslash}p{#1}}
\newcolumntype{R}[1]{>{\raggedleft\arraybackslash}p{#1}}
\newtheorem{theorem}{Theorem}
\theoremstyle{plain}
\newtheorem{lemma}[theorem]{Lemma}
\theoremstyle{definition}
\newtheorem{question}[]{Question}
\newtheorem*{procedure*}{Procedure}
\author{Geoffrey Exoo\affiliationmark{1}\thanks{Supported by Grant No.\ 1751765 from the National Science Foundation, USA.}
  \and Jan Goedgebeur\affiliationmark{2,3}\thanks{Supported by a Postdoctoral Fellowship of the Research Foundation Flanders (FWO).}}
\title{Bounds for the smallest $k$-chromatic graphs of given girth}
\affiliation{
  Department of Mathematics and Computer Science, Indiana State University, USA\\
  Department of Applied Mathematics, Computer Science and Statistics, Ghent University, Belgium\\
  Computer Science Department, University of Mons, Belgium}
\keywords{triangle-free graph, girth, chromatic number, semiregular, computation}
\begin{document}
\publicationdetails{21}{2019}{3}{9}{4576}
\maketitle
\begin{abstract}
Let $n_g(k)$ denote the smallest order of a $k$-chromatic graph of girth at least~$g$.
We consider the problem of determining $n_g(k)$ for small values of $k$ and $g$.
After giving an overview of what is known about $n_g(k)$, we
provide some new lower bounds based on exhaustive searches, and
then obtain several new upper bounds using computer
algorithms for the construction of witnesses, and for the verification
of their correctness. 
We also present the first examples of reasonably small order for $k = 4$ and $g > 5$.
In particular, the new bounds include:
$n_4(7) \leq 77$, $26 \leq n_6(4) \leq 66$ and $30 \leq n_7(4) \leq 171$.
\end{abstract}


\section{Introduction}
\label{section:intro}

This paper deals with the problem of determining the minimum order among
graphs with given girth $g$ and chromatic number $k$. The \textit{chromatic number} of a graph is the minimum number of colours required to colour the vertices of the graph such that no two adjacent vertices have the same colour. The \textit{girth} of a graph is the length of its shortest cycle.

In a well-known demonstration of the power of the probabilistic
method Erd\H{o}s \cite{Erd59} established in 1959 the existence of graphs
for which both the girth and the chromatic number are arbitrarily large.
This result followed earlier efforts from the early fifties of Zykov \cite{zykov1949some}, Descartes \cite{descartes1954}, and Kelly and Kelly \cite{kelly1954paths}
who constructed graphs for girth less than or equal to six and with
arbitrarily large chromatic numbers.
At around the same time
an important construction was discovered by Mycielski \cite{M55}, who
showed how to use a $k$-chromatic triangle free graph
of order $n$ to construct a $(k+1)$-chromatic triangle free graph of order $2n+1$.
Others, including  Lov{\'a}sz~\cite{lovasz1968chromatic}, Kostochka and Ne\v{s}et\v{r}il~\cite{kostnesi}, and Alon et al.~\cite{alon2016coloring}, have provided constructions of graphs with given chromatic number and girth.

Because the actual graphs produced by these methods are extremely large, especially for
girth five and up,
there have been efforts to identify the smallest  graphs for each value of $g$ and $k$.
To this end, let $n_g(k)$ denote the order of the smallest $k$-chromatic graph with girth at
least $g$.  

Chv{\'a}tal~\cite{C74} showed in 1974 that the Gr\"otzsch graph is the smallest
triangle-free $4$-chromatic graph, so $n_4(4)=11$.
In~\cite{toft1988} Toft asked
for the value of $n_4(5)$.
The Mycielski construction immediately
gives an upper bound $n_4(5) \leq 23$.
Using a computer search, Grinstead, Katinsky and Van Stone~\cite{grinstead1989minimal}
showed that $21 \leq n_4(5) \leq 22$.  The issue was settled in 1995 by
Jensen and Royle~\cite{jensen1995small} who established the exact value $n_4(5)=22$.
Note that $n_4(k)$ is equal to the value of the vertex Folkman number $F_v(2^{k-1};3)$~\cite{xu2016chromatic}.

In a posting on {\it StackExchange} from 2015,
Droogendijk~\cite{droogendijk2015} showed that $n_4(6) \leq 44$, improving the
upper bound of $45$ derived from the Mycielski construction.
In the arXiv manuscript~\cite{goedgebeur2017} the second author recently lowered this bound to $40$, and also
established the bounds
$32 \leq n_4(6)$, $41 \leq n_4(7) \leq 81$,
$29 \leq n_5(5)$ and $25 \leq n_6(4)$. 

In~\cite[Section 7.3]{jensen1995graph} Jensen and Toft asked for the value of $n_5(4)$. The
Brinkmann graph~\cite{brinkmann1997smallest} gives an upper bound of $n_5(4) \leq 21$.
Royle~\cite{royle2015} showed that $n_5(4) = 21$ and that there are
exactly 18 $4$-chromatic graphs of girth at least 5 on 21 vertices.

Asymptotic bounds on $n_4(k)$ are discussed in~\cite[Section 7.3]{jensen1995graph}.
The bounds are closely related to results on the classical Ramsey numbers $R(3,t)$.
It is shown that there exist positive constants $c_1$ and $c_2$ such that
\[
c_1 k^2 \log k \leq n_4(k) \leq c_2 (k \, \log k)^2.
\]

Kim's~\cite{kim1995ramsey} lower bound on $R(3,t)$ which established that $R(3,t) = \Theta(t^2 / \log t)$  implies that $n_4(k) = \Theta(k^2 \log k)$.

For larger girth, the best known asymptotic lower bound appears to be based on
the well-known Moore bound on the order of graphs with given minimum degree
and girth~\cite{exoo2008dynamic}.
Recall that a \textit{$k$-vertex-critical} graph is a $k$-chromatic graph such that every
proper induced subgraph is $(k-1)$-colourable.  Such a graph has minimum degree at
least $k-1$.
Using minimum degree $k-1$ and the Moore bound, we obtain the following bound for
odd girth $g$:

\[
n_g(k) \geq \frac{(k-1)(k-2)^{(g-1)/2} - 2}{k-3}.
\]

Similarly for even girth we have:

\[
n_g(k) \geq \frac{2(k-2)^{g/2} - 2}{k-3}.
\]

In this paper we obtain
new computational lower and upper bounds for
$g \leq 7$ and $k \leq 7$, and
describe the construction methods used for the upper bounds.

In Table~\ref{table:bounds} we give an overview of (to the best of our
knowledge) the current bounds for $n_g(k)$. 
The known exact values of $n_g(k)$ are listed as vertically centred values and the lower and upper bounds appear as top and bottom entries, respectively.

\begin{table}[htb!]
\centering
 \renewcommand{\arraystretch}{1.25} 
    \begin{tabular}{|lr||C{40pt}|C{40pt}|C{40pt}|C{40pt}|}
      \hline   
  & $g$  &  4   & 5    & 6    & 7\\  
$k$ & &     &     &     & \\    
  \hline\hline        
      \multirow{2}{*}{4} &  & \multirow{2}{*}{11}   & \multirow{2}{*}{21}   & \textbf{26}  & \textbf{30} \\
        & &    &    & \textbf{66}  & \textbf{171}\\    
        \hline  
      \multirow{2}{*}{5}  & & \multirow{2}{*}{22}   & 29   & \textbf{33}  & \textbf{66} \\
        & &    & 80   &   & \\  
        \hline      
      \multirow{2}{*}{6} &  & 32   & \textbf{36}   & \textbf{51}  & \textbf{127} \\
        & & 40   &    &   & \\      
        \hline    
      \multirow{2}{*}{7} &  & 41   & \textbf{45}   & \textbf{73}  & \textbf{218} \\
        & & \textbf{77}   &    &   & \\  
        \hline      
      \multirow{2}{*}{8} &  & 51   & \textbf{57}   & \textbf{99}  & \textbf{345} \\
        & & \textbf{155}   &    &   & \\              
      \hline      
    \end{tabular}
\caption{Known nontrivial values and bounds for $n_g(k)$. The new bounds
determined in this paper are marked in bold. In Section~\ref{sect:LB} we describe how we obtained the new lower bounds and in Section~\ref{sect:UB} how we obtained the new upper bounds.}
\label{table:bounds}
\end{table}

The precise determination of the chromatic number for several of our graphs required extensive
computations.  While the chromatic number claims for some of the smaller graphs can be quickly verified
using packages like Sage, Maple or Mathematica, others required hours of computation
spread across hundreds of multicore CPUs.
For each of the graphs which yield a new upper bound in
Table~\ref{table:bounds}, the chromatic number has been verified by two
independent algorithms (one implemented by each author) and all results
were in complete agreement.

In the next section, we give details on our improvements on the lower
bounds.
Then in Section~\ref{sect:UB} we discuss the methods used to obtain the
new upper bounds. Finally, in Section~\ref{sect:open_problems} we conclude with some open problems.


\section{Improving lower bounds for $n_g(k)$}
\label{sect:LB}

In this section we review some useful facts about $k$-chromatic graphs and then
present two formulas that give general lower bounds for
$n_g(k)$.  We then provide more detailed computational arguments for the cases
$n_6(4)$ and $n_7(4)$.

\begin{lemma} The following general lower bound for $n_g(k)$ holds for $g \geq 4$:
\begin{equation*}\label{eq:lb1}
n_g(k) \geq n_g(k-1) + max(k, \lceil 3(k-2)/2 \rceil) + 1
\end{equation*}
\end{lemma}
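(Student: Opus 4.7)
The plan is as follows. Let $G$ be a $k$-vertex-critical graph of girth at least $g$ with $|V(G)|=n_g(k)$; vertex-criticality implies $G$ is connected.

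The cornerstone I would establish first is the following reduction: \emph{if $v\in V(G)$ and $I$ is an independent set of $G$ with $N(v)\subseteq I$ and $v\notin I$, then $\chi(G-I-\{v\})\geq k-1$.} Were $G-I-\{v\}$ properly $(k-2)$-colourable, one could extend the colouring by giving every vertex of $I$ the new colour $k-1$ (valid because $I$ is independent) and giving $v$ any of the first $k-2$ colours (valid because every neighbour of $v$ lies in $I$ and so carries colour $k-1$), producing a proper $(k-1)$-colouring of $G$ and contradicting $\chi(G)=k$. Since $G-I-\{v\}$ inherits girth at least $g$ from $G$, this already yields $n_g(k)\geq n_g(k-1)+|I|+1$.

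The lemma therefore reduces to locating, in every such $G$, a vertex $v$ and an independent set $I\supseteq N(v)$ with $v\notin I$ and $|I|\geq \max(k,\lceil 3(k-2)/2\rceil)$. Because $g\geq 4$ no triangle contains $v$, so $N(v)$ is already independent, and the choice $I=N(v)$ is always valid of size $d(v)\geq k-1$. For the ``$k$'' half of the maximum (which is the binding one when $k\leq 6$) I would invoke Brooks' theorem: $G\neq K_k$ by the girth hypothesis and $G$ is not an odd cycle since $k\geq 4$, so some $v$ satisfies $d(v)\geq k$, and $I=N(v)$ then works.

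The hard step is the ``$\lceil 3(k-2)/2\rceil$'' half, which only matters for $k\geq 7$. My plan there is a dichotomy on $\Delta(G)$: if $\Delta(G)\geq \lceil 3(k-2)/2\rceil$, take $I=N(v)$ for a maximum-degree $v$; otherwise every degree is strictly smaller, and I would enlarge $I=N(v)$ for a well-chosen $v$ by adjoining additional vertices $w_1,w_2,\ldots\in V(G)\setminus N[v]$ that are pairwise non-adjacent and non-adjacent to $N(v)$, so that $I$ stays independent. Guaranteeing enough such augmenting vertices in every critical graph is the main obstacle; my intended tool is a Kempe-chain analysis of a $(k-1)$-colouring of $G-v$ at a minimum-degree vertex, exploiting the rigidity that each colour must appear exactly once in $N(v)$ to force the complement of $N[v]$ to be sufficiently rich whenever $\Delta(G)$ is small.
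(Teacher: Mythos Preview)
Your reduction step and the Brooks part are fine and match the paper's argument: take a smallest $k$-chromatic graph of girth $\geq g$, pass to a $k$-vertex-critical subgraph, pick a vertex $v$ of maximum degree, and observe that $G-N[v]$ still has girth $\geq g$ and chromatic number $\geq k-1$, so $n_g(k)\geq n_g(k-1)+\Delta(G)+1$. The paper then finishes in one line that you are missing: Kostochka proved that every triangle-free graph satisfies $\chi\leq \tfrac{2\Delta}{3}+2$, which for a $k$-chromatic triangle-free $G$ forces $\Delta(G)\geq\lceil 3(k-2)/2\rceil$ outright. Combined with Brooks, this gives $\Delta(G)\geq\max(k,\lceil 3(k-2)/2\rceil)$ and the lemma follows immediately with $I=N(v)$; no enlargement of $I$ is ever needed.

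Your ``hard step'' is therefore not just incomplete but unnecessary. The dichotomy you set up never has a second branch: the case $\Delta(G)<\lceil 3(k-2)/2\rceil$ is vacuous by Kostochka's theorem. More importantly, the Kempe-chain plan you sketch is not a proof. Knowing that each of the $k-1$ colours appears exactly once on $N(v)$ in a $(k-1)$-colouring of $G-v$ tells you about the colour classes, but it does not by itself manufacture $\lceil 3(k-2)/2\rceil-(k-1)$ extra vertices that are simultaneously non-adjacent to one another and to all of $N(v)$; extracting that from Kempe chains would essentially amount to reproving Kostochka's bound, which is a nontrivial result in its own right. Replace the whole second half of your argument with a citation to Kostochka and you have the paper's proof.
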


\begin{proof}
It follows from Brooks'
Theorem~\cite{brooks1941colouring} that a connected $k$-chromatic graph which is not a  complete graph or an odd cycle must have maximum degree at least $k$.
Kostochka~\cite{kostochka1982modification} proved that triangle-free graphs have chromatic number at most $\frac{2\Delta}{3}+2$, so the maximum degree of a $k$-chromatic triangle-free graph is at least $\lceil 3(k-2)/2 \rceil$.

Note that removing a vertex of degree $d$ and its $d$ neighbours from a
$k$-vertex-critical graph $G$ of girth $g \geq 4$ yields a $(k-1)$-chromatic
graph of girth at least $g$ on $|V(G)| - d - 1$ vertices.
This observation gives us the general lower bound from the statement.
\end{proof}


The second general condition is obtained by a variation of the argument used to
establish the Moore bound, which is obtained by counting the number of
vertices which are at distance at
most $\lfloor (g-1)/2 \rfloor$ from a central vertex for odd $g$ or a central
edge for even $g$.
We note again that
the Moore bound for the order of a smallest  graph of minimum degree $d$ and girth $g$ is:
\begin{lemma}[Moore bound]
\begin{equation*}
  \begin{cases}
   \  \frac{d(d-1)^{(g-1)/2}-2}{d-2} & \quad \text{if } g \text{ is odd}\\\\
   \  \frac{2(d-1)^{g/2} - 2}{d-2}  & \quad \text{if } g \text{ is even}
  \end{cases}
\end{equation*}
\end{lemma}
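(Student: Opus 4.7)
The plan is to prove both bounds by a single unified counting argument: pick a central piece of the graph (a vertex for odd girth, an edge for even girth), partition the vertex set into breadth-first layers around this center, and show that the girth hypothesis forces each layer up to a controlled depth to be essentially as large as in the $d$-regular tree. Summing the resulting geometric progressions produces the two formulas in the statement.

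For odd $g = 2r+1$, I would pick any vertex $v$ and let $L_i$ denote the set of vertices at distance exactly $i$ from $v$. Each vertex in $L_{i-1}$ has at least $d-1$ neighbors in $L_i$ (and $v$ has $d$ neighbors in $L_1$), and I would argue that these neighbors are distinct across different parents and different levels up to depth $r$: any such collision among vertices of $\bigcup_{i \leq r} L_i$ would produce a closed walk, and hence a cycle, of length at most $2r = g-1$, contradicting the girth. This gives $|L_i| \geq d(d-1)^{i-1}$ for $1 \leq i \leq r$, and summing $1 + \sum_{i=1}^{r} d(d-1)^{i-1}$ as a geometric series yields the stated bound.

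For even $g = 2r$, I would replace the central vertex by a central edge $uv$, set $L_0 = \{u,v\}$, and let $L_i$ be the vertices at distance exactly $i$ from this edge. Each vertex in $L_{i-1}$ again contributes at least $d-1$ fresh neighbors to $L_i$, and the analogous collision argument applies. The new feature here is that there are two types of collision to rule out: two vertices in some $L_i$ with $i \leq r-1$ reachable by distinct paths from the same endpoint, and a single vertex reachable from both $u$ and $v$ within depth $r-1$. In either case, using the edge $uv$ to close up the walk, one obtains a cycle of length at most $2(r-1) + 1 = 2r-1 < g$, the contradiction we need. This gives $|L_i| \geq 2(d-1)^i$ for $0 \leq i \leq r-1$, and summing $\sum_{i=0}^{r-1} 2(d-1)^i$ delivers the even formula.

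The main place where care is required is the collision analysis in the even case, where the edge $uv$ has to be spliced into any conflicting pair of short paths in order to produce a closed walk of length strictly less than $g$; in particular one must stop the count at $L_{r-1}$ rather than $L_r$, since at depth $r$ the resulting closed walk would have length $2r$ and no longer give a contradiction. Once this bookkeeping is done, the remainder of the proof is a routine summation of geometric series.
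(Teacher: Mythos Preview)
Your proposal is correct and follows exactly the approach the paper indicates: the paper does not give a formal proof of this lemma (it is cited as the classical Moore bound), but in the preceding paragraph it explicitly describes the argument as ``counting the number of vertices which are at distance at most $\lfloor (g-1)/2 \rfloor$ from a central vertex for odd $g$ or a central edge for even $g$,'' which is precisely your layered BFS count with the collision/short-cycle contradiction. Your bookkeeping, including the stopping depth $r-1$ in the even case and the use of the edge $uv$ to close up cross-side collisions, is the standard and correct way to fill in the details.
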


This can be used to prove the following improved lower bounds for $n_g(k)$:

\begin{lemma}\label{eq:lb2} The following general lower bounds for $n_g(k)$ hold:
\begin{align} 
\begin{split}
n_4(k) &\geq (k-1) + k + k - 2 = 3k - 3 \nonumber \\
n_5(k) &\geq (k-1)k+1 = k^2 - k + 1 \nonumber \\
n_6(k) &\geq 2(k-2)(k-1)+2+k-1 + k - 2 = 2k^2 - 4k + 3 \nonumber \\
n_7(k) &\geq ((k-1)(k-2)+1)k + 1 = k^3 - 3k^2 + 3k + 1 \nonumber
\end{split}
\end{align}
\end{lemma}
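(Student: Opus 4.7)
The plan is to refine the classical Moore-bound argument by combining two structural facts about a $k$-vertex-critical graph $G$ of girth at least $g$: namely, $\delta(G)\geq k-1$ (standard for critical graphs) and $\Delta(G)\geq k$. The latter comes from Brooks' theorem, which applies because for $g\geq 4$ and $k\geq 4$ the graph $G$ is neither a complete graph nor an odd cycle. I would then perform a BFS from a well-chosen ``centre'' of maximum degree and count vertices layer by layer.

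For odd $g$ I would run BFS to depth $(g-1)/2$ from a vertex $u$ with $\deg(u)\geq k$, obtaining layers $L_0,L_1,\dots,L_{(g-1)/2}$. The girth hypothesis is what guarantees that the BFS ball has a tree structure: any coincidence between two vertices at depth at most $(g-1)/2$, whether a back-edge skipping a layer, a chord within a layer, or two children sharing a parent, would yield a closed walk reducible to a cycle of length strictly less than~$g$. Hence $|L_0|=1$, $|L_1|\geq k$ by the choice of $u$, and $|L_i|\geq(k-2)|L_{i-1}|$ for $2\leq i\leq(g-1)/2$, because each vertex of $L_{i-1}$ has at least $k-1$ neighbours and exactly one of them (its BFS parent) lies in $L_{i-2}$. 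Summing gives
\[
n_g(k)\ \geq\ 1+k\bigl(1+(k-2)+(k-2)^2+\cdots+(k-2)^{(g-3)/2}\bigr),
\]
which simplifies to $k^2-k+1$ for $g=5$ and to $k^3-3k^2+3k+1$ for $g=7$.

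For even $g$ I would use the analogue from a central edge $uv$ with $\deg(u)\geq k$ and $\deg(v)\geq k-1$. A common neighbour of $u$ and $v$ would create a triangle, so the two sides of $L_1$ are disjoint and $|L_1|\geq(k-1)+(k-2)=2k-3$. For $g=6$ the BFS extends one layer further: each $w\in L_1$ emits at least $k-2$ edges into $L_2$, and a coincidence of two such grandchildren would force a $4$- or $5$-cycle depending on whether its two $L_1$-parents lie on the same side or opposite sides of the central edge, both forbidden by $g\geq 6$. Adding up the three layers, together with the Moore skeleton $2(k-2)(k-1)+2$ augmented by the extra contribution $(k-1)+(k-2)$ that the Brooks vertex at $u$ gives over min degree, matches the stated $2k^2-4k+3$. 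The $g=4$ bound $3k-3$ is the shallow analogue: one gets $2+(2k-3)$ vertices up through $L_1$, and although $g\geq 4$ now permits some $L_2$ vertices to collapse back into $L_1$ via $4$-cycles, a careful accounting of the neighbours on the ``far side'' still forces at least $k-2$ further distinct vertices.

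The main obstacle is the distinctness bookkeeping: for every potential coincidence of vertices in the BFS ball one must locate the short closed walk it creates and reduce it to a cycle of length strictly less than~$g$. This is clean for odd $g$, where BFS depth $(g-1)/2$ sits safely below the forbidden regime, but for even $g$ one has to be careful because cycles of length exactly $g$ are permitted, so some coincidences at the deepest layer are not ruled out by the girth alone and the recursion must be performed one layer shallower. Making the Brooks improvement at the centre propagate cleanly through the geometric growth of the layers is what upgrades the textbook Moore expression (with $d=k-1$) to the sharper bounds in the statement.
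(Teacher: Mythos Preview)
Your odd-girth argument is correct and matches the paper's: put a vertex of degree $\geq k$ at the centre of the Moore tree and count.

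For even girth, however, there is a genuine gap. You attribute the entire surplus $(k-1)+(k-2)$ over the min-degree Moore skeleton to the Brooks vertex at $u$, but that is an over-count. With $\deg(u)\geq k$ and $\deg(v)\geq k-1$, the central-edge BFS for $g=6$ gives
\[
2 \;+\; \bigl[(k-1)+(k-2)\bigr] \;+\; (2k-3)(k-2) \;=\; 2k^2-5k+5,
\]
which exceeds the plain Moore bound $2(k-2)(k-1)+2$ by only $k-1$, not by $2k-3$. For $g=4$ the BFS from the edge yields exactly $2k-1$ vertices, and no ``careful accounting of the neighbours on the far side'' can force more: the complete bipartite graph $K_{k-1,k}$ has girth $4$, minimum degree $k-1$, a vertex of degree $k$, and precisely $2k-1$ vertices, so a purely degree-based count cannot get past $2k-1$.

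The missing ingredient, which the paper uses, is a chromatic argument rather than a degree argument: for even $g$, the subgraph induced on all vertices at distance at most $g/2-1$ from the central edge is \emph{bipartite} (any odd cycle there would have length at most $g-1$, violating the girth). Hence that induced subgraph is $2$-colourable, and since adding one vertex raises the chromatic number by at most one, a $k$-chromatic $G$ must contain at least $k-2$ further vertices at distance $\geq g/2$ from the edge. This supplies the final $+\,(k-2)$ in both the $g=4$ and $g=6$ formulas.
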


\begin{proof}
A $k$-vertex-critical graph of girth $g$ has minimum degree at least
$k-1$ and maximum degree at least $k$, so we modify the Moore bound
argument by using a degree $k$ vertex in the central position. 
The idea is illustrated in
Figures~\ref{fig:moore_bound_girth5} and~\ref{fig:moore_bound_girth6} for
the case $k=4$ and $g=5$, and for the case $k=4$ and $g=6$. This yields the formulas from the theorem for the odd girth case.

\begin{figure}[h!tb]
	\centering
    \subfloat[]{\label{fig:moore_bound_girth5}\includegraphics[width=0.3\textwidth]{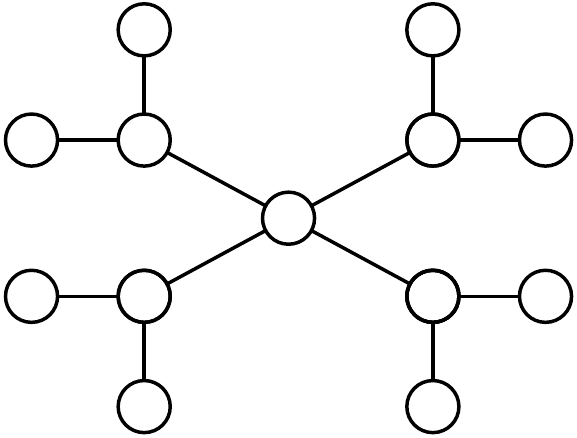}}	\qquad \qquad
    \subfloat[]{\label{fig:moore_bound_girth6}\includegraphics[width=0.3\textwidth]{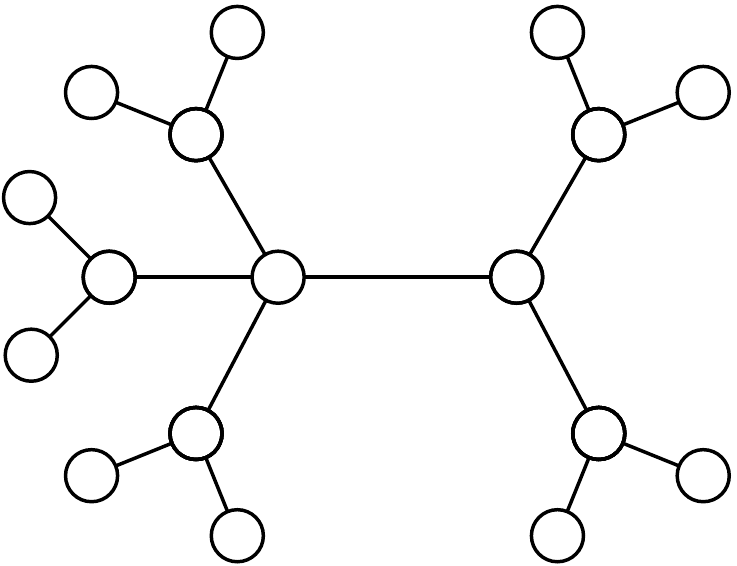}}   
	\caption{Construction for the minimum possible order of graphs with minimum degree 3 and maximum degree 4 with girth 5 and 6, respectively.}
	\label{fig:moore_bound}
\end{figure}

For the even girth case, we can say a little more. Here the extremal graphs are bipartite unless there are vertices at distance $g/2$ from the base edge. Adding a vertex can increase the chromatic number of a graph by at most one, so in a $k$-chromatic graph of girth $g$ there must be at least $k-2$ vertices at distance at least $g/2$ from the base edge. 
\end{proof}


The algorithm used in~\cite{goedgebeur2017} exhaustively generates all
triangle-free $k$-chromatic graphs from a given order by starting from the properly chosen set of triangle-free $(k-1)$-chromatic  graphs and adding a new vertex with a
given number of neighbours and connecting the neighbours to independent sets of
the source graphs in all possible ways. This algorithm can also be adapted to
generate all $k$-chromatic graphs of higher girth (and this was indeed used
in~\cite{goedgebeur2017} to show that  $n_5(5) \geq 29$). However, this method
is not effective to generate $k$-chromatic graphs of girth at least 6, since
the number of $(k-1)$-chromatic source graphs that the algorithm would have to
handle is huge. 

However we did computationally obtain the following new lower bounds using an alternative method.


\begin{theorem}
$n_6(4) \geq 26$ and $n_7(4) \geq 30$.
\end{theorem}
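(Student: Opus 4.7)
The plan is to establish both lower bounds by a restricted exhaustive computer search, using Lemma \ref{eq:lb2} to confine the range of orders that need to be examined. For girth $7$ the Moore-based bound already gives $n_7(4)\geq 29$, so only $n=29$ remains to be excluded; for girth $6$ the Moore-based bound gives $n_6(4)\geq 19$, so the orders $n=19,20,\dots,25$ must each be ruled out. In every case, it suffices to rule out the existence of a $4$-vertex-critical graph $G$ of the relevant girth on exactly $n$ vertices, since any $4$-chromatic graph contains such a subgraph and removing vertices cannot increase the girth.

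The key reduction is to exploit the very tight structural constraints forced by the combination of girth, minimum degree, and the proximity of $n$ to the Moore bound. By Brooks' theorem and Kostochka's bound (used in the previous lemma), $G$ has $\delta(G)\geq 3$ and $\Delta(G)\geq 4$. Pick a vertex $v$ of degree $\Delta(G)$ and consider the ball $B_r(v)$ of radius $r=\lfloor(g-1)/2\rfloor$; the girth condition forces $B_r(v)$ to be a tree, and the minimum-degree condition together with $n$ being close to the Moore bound forces the degree sequence inside this tree to be nearly extremal. One therefore enumerates all rooted trees of depth $r$ with root-degree $\geq 4$ and other internal degrees in $\{3,4\}$ that fit within $n$ vertices, and for each candidate tree one uses an orderly backtracking procedure to extend it to a full graph on $n$ vertices by adding the missing edges between leaves (and, for $g=6$, also between intermediate levels), always respecting the girth bound.

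For each candidate completion, the chromatic number is verified with a fast exact solver (and a $3$-colouring algorithm in particular suffices, since finding any proper $3$-colouring immediately discards the candidate). Aggressive pruning can be obtained by maintaining partial $3$-colourings during the extension phase: as soon as a partial graph has a $3$-colouring that can be extended to any way of completing the remaining edges consistently with girth, we may abort that branch. Isomorph rejection via canonical-form pruning on the partial structures keeps the enumeration feasible.

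The hard part will be showing that this search is actually feasible for $n$ up to $25$ in the girth-$6$ case; the single case $g=7$, $n=29$ is much more constrained (every vertex other than $v$ has degree exactly $3$, and the Moore-tree structure is essentially rigid up to relabelling of leaves), so it should be amenable to a direct and almost hand-checkable argument, whereas the girth-$6$ search will likely require nontrivial engineering and substantial CPU time, together with independent re-implementation of the search and of the chromatic-number check to provide confidence in the negative result.
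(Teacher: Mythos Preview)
Your plan is essentially the paper's own: reduce to $4$-vertex-critical graphs (so $\delta\ge 3$), use Moore-type counting to confine the search, exhaustively enumerate all candidates, and verify that each is $3$-colourable. The differences are in implementation and in two places where your write-up slips.

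First, enumeration. The paper does not grow Moore trees; instead it derives an explicit upper bound on $\Delta$ from the \emph{edge}-centred Moore count (a vertex of degree $\Delta$ together with $\delta\ge 3$ and girth $g$ forces at least $3\Delta+5$ vertices for $g=6$ and $1+7\Delta$ for $g=7$), obtaining $\Delta\le 6$ for $g=6$, $n\le 25$ and $\Delta\le 4$ for $g=7$, $n=29$. It then generates, with a girth-restricted extension of \texttt{geng}, all graphs with $3\le\deg\le\Delta$ of the required girth and order, and checks $3$-colourability. Your tree-extension scheme is a legitimate alternative, but your constraint ``other internal degrees in $\{3,4\}$'' is not justified in the girth-$6$ case: since the root is a maximum-degree vertex and that degree can be as large as $6$ (or $8$ with your vertex-centred count), the depth-$1$ vertices may have any degree in $\{3,\dots,\Delta\}$, and restricting to $\{3,4\}$ would miss graphs.

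Second, your calibration of difficulty is inverted. The paper reports about $2.5$ CPU-years for the entire girth-$6$ range but about $12$ CPU-years for the single case $g=7$, $n=29$; this case is not ``almost hand-checkable''. Relatedly, your claim that every vertex other than $v$ has degree exactly~$3$ is not correct: the Moore count does force the $4+8$ internal vertices to have degree~$3$, but the $16$ leaves may have degree $3$ or $4$, and the number of girth-$7$ completions by leaf--leaf edges is large enough to make the colourability check the dominant cost.
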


\begin{proof}
It follows from Lemma~\ref{eq:lb2} that $n_6(4) \geq 19$ and $n_7(4) \geq 29$.

We extended the generator \verb|geng|~\cite{nauty-website, mckay_14}
to generate graphs with girth at least 6 and girth at least 7 efficiently.
The source code of our plugin for \verb|geng| can be found in the ``ancillary material" of this paper.

By using the same reasoning as in the proof of Lemma~\ref{eq:lb2}, it is easy to see that a $4$-vertex-critical graph of girth at least 6 with maximum degree at least 7 must have at least 26 vertices and that a 4-vertex-critical graph of girth at least 7 with maximum degree at least 5 must have at least 36 vertices.

Using our extended version of \verb|geng|, we generated all graphs with minimum degree at least 3, maximum degree at most 6 and girth at least 6 from 19 up to 25 vertices and all graphs with minimum degree at least 3, maximum degree 4 and girth at least 7 on 29 vertices. We verified that all of these graphs are $3$-colourable, which yields the improved lower bounds from the statement. These computations were executed on a cluster and required roughly 2.5 and 12 CPU years for girth 6 and 7, respectively.
\end{proof}

The lower bounds listed in Table~\ref{table:bounds} for $n_4(k)$ with $k \geq
7$, for $n_5(k)$ with $k \geq 6$, for $n_6(k)$ with $k \geq 5$ and for $n_7(k)$
with $k \geq 5$ were obtained by taking the maximum of the formulas in Lemmas~\ref{eq:lb1} and~\ref{eq:lb2} as it was infeasible to improve these theoretical bounds
using our computational methods.


\section{Improving upper bounds for  $n_g(k)$}
\label{sect:UB}

In this section we present constructions for $k$-chromatic graphs of given girth and the graphs we obtained with it which establish an improved upper bound for $n_g(k)$. The adjacency lists of these graphs (as well as their \textit{graph6} encoding) can also be found in the ``ancillary material" of this paper on arXiv.

\subsection{Constructions for triangle-free $k$-chromatic graphs}
\label{subsect:UB_tf}

The construction by Mycielski~\cite{M55} is a classical construction for triangle-free
graphs of arbitrarily large chromatic number.
It yields an upper bound of $n_4(k+1) \leq 2n_4(k) + 1$.
In an interesting web posting Droogendijk~\cite{droogendijk2015} proposed the construction given below. This is a generalisation of a construction used by Jensen and Royle in Lemma~3 of~\cite{jensen1995small}.
In our outline of the procedure
we make extensive use of the following notation.
Given a graph $G$ and
a vertex $w \in V(G)$, we denote the set of neighbours
of $w$ by $N(w,G)$ or, if $G$ is clear from context, simply $N(w)$.

\medskip

\begin{procedure*}[Droogendijk~\cite{droogendijk2015}]\label{thm:droogendijk}
\textit{Let $G$ be a triangle-free $k$-chromatic graph on $n$ vertices and $S$ an
independent set such that no $(k-2)$-colouring of the
non-neighbours of $S$ can be extended to a $(k-1)$-colouring of $G-S$. Then the
triangle-free graph $G^*$ on $2n + 2 - |S|$ vertices which is constructed as described below
is $(k+1)$-chromatic\footnote{Note: As will be seen later not every graph
constructed in this way will be $(k+1)$-chromatic.}.} 
\end{procedure*}

Let $A$ be the set of neighbours of $S$, that is, $A = \{v \ | \ v \in N(w) : w
\in S\}$ and let $B$
be the set of non-neighbours of $S$, that is: $B = V(G) \setminus (S \cup
A)$.
The graph $G^*$ will have vertex set $V(G) \cup A' \cup B' \cup \{\alpha,\beta\}$.
$A'$ is an additional set of vertices $|A'|=|A|$.  Fix a one-to-one correspondence
between $A$ and $A'$.
Similarly,
$B'$ is an additional set of vertices $|B'|=|B|$.  Fix a one-to-one correspondence
between $B$ and $B'$.
Add edges between each vertex of $A'$ and the neighbours of the corresponding
vertex of $A$.
Similarly,
add edges between each vertex of $B'$ and the neighbours of the corresponding
vertex in $B$.
Finally, add two special vertices
$\alpha$ and $\beta$ which are adjoined to all vertices in $S \cup B'$
and $A' \cup B'$, respectively.
Note that if $G$ is $k$-chromatic and $|S| = 1$, $G[B]$ cannot be
$(k-2)$-colourable so in that case the conditions of the above procedure are
always fulfilled.
This construction will frequently produce $(k+1)$-chromatic graphs which
are smaller than those obtained by the Mycielski construction (i.e., when $|S| > 1$).

There are situations where $G^*$ is not $(k+1)$-chromatic.  For example,
let $G$ be a $9$-cycle (hence $3$-chromatic) with vertices $v_0, \dots, v_8$,
labelled cyclically.
Also let $S = \{v_0,v_3\}$, so $A = \{v_1,v_2,v_4,v_8\}$, and $B = \{v_5,v_6,v_7\}$.
Then $G^*$ turns out to be a $3$-chromatic graph on $18$ vertices. There are
several other ``counterexamples" for larger values of $k$.

Nevertheless, the construction method is very effective at obtaining
triangle-free $(k+1)$-chromatic graphs and yielded the following improved upper
bound for $n_4(7)$.


\begin{theorem}\label{thm:ub_n47}
$n_4(7) \leq 77$.
\end{theorem}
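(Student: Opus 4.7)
The plan is to apply Droogendijk's procedure to a triangle-free $6$-chromatic base graph $G$ on $40$ vertices, whose existence is guaranteed by the upper bound $n_4(6)\le 40$ from~\cite{goedgebeur2017}. Since the procedure outputs a graph on $2|V(G)|+2-|S|$ vertices, targeting $77 = 2\cdot 40 + 2 - 5$ vertices requires an independent set $S$ of size exactly $5$ satisfying the applicability hypothesis of the procedure. The whole argument therefore reduces to (i) locating such a pair $(G,S)$, and (ii) certifying that the resulting $77$-vertex graph $G^{*}$ is actually $7$-chromatic rather than of unknown chromatic number, since, as the $9$-cycle example warns, the construction can fail to raise $\chi$ by one.

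For step (i), I would fix a specific triangle-free $6$-chromatic graph $G$ on $40$ vertices from the catalogue of~\cite{goedgebeur2017} and iterate, by randomised sampling or organised enumeration, over its independent $5$-subsets. For each candidate $S$, letting $A=N(S)$ and $B=V(G)\setminus(S\cup A)$, the applicability test is the finite decision problem ``does every proper $4$-colouring of $G[B]$ fail to extend to a proper $5$-colouring of $G-S$?'', which can be settled by a standard backtracking colouring routine on a graph of order below $40$. Given such an $S$, the construction of $G^{*}$ is mechanical: duplicate the vertex sets $A$ and $B$ as $A'$ and $B'$, copying the edges to $N(A)$ and $N(B)$ in $G$, then attach the two apex vertices $\alpha,\beta$ to $S\cup B'$ and $A'\cup B'$ respectively. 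Triangle-freeness transfers to $G^{*}$: any new triangle would involve $\alpha$, $\beta$, or a duplicate, and each case is ruled out by triangle-freeness of $G$ together with the independence of $S$ and the non-adjacency of $\alpha$ and $\beta$.

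A proper $7$-colouring of $G^{*}$ also comes for free from any $6$-colouring of $G$ by copying each original's colour to its duplicate and assigning a fresh seventh colour to both $\alpha$ and $\beta$, so $\chi(G^{*})\le 7$. The main obstacle is the matching lower bound: ruling out a proper $6$-colouring of a $77$-vertex graph is NP-hard in general, so, in line with the verification protocol announced in the introduction, I would run two independently implemented exact chromatic-number solvers on $G^{*}$, distributed across a multicore cluster, and accept the bound only when both certify $\chi(G^{*})\ge 7$. This computational certification of the chromatic-number lower bound is the critical and most expensive step; the construction itself is essentially routine once a valid pair $(G,S)$ has been pinned down.
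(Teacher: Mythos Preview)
Your proposal is essentially the same approach as the paper's proof: apply Droogendijk's procedure with $|S|=5$ to a $40$-vertex triangle-free $6$-chromatic graph from~\cite{goedgebeur2017}, then computationally certify that the resulting $77$-vertex graph is $7$-chromatic using two independent solvers. The only practical difference is that the paper ran the search over the full catalogue of more than $750\,000$ such base graphs rather than fixing one in advance---worth noting since there is no a~priori guarantee that any single chosen $G$ admits a suitable $5$-element $S$, and indeed the paper reports obtaining only ``several'' successful outputs from that entire catalogue.
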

\begin{proof}
We implemented a computer program which searches for independent sets $S$ with
the required properties from Droogendijk's procedure in the input graphs and
which applies the construction to them.
We executed this program on the more than 750\,000 triangle-free
$6$-chromatic graphs on $40$ vertices from~\cite{goedgebeur2017}.
This yielded
several triangle-free $7$-chromatic graphs on $77$ vertices and no smaller ones.
Our specialised programs required approximately 100 hours per graph to verify that
these graphs are indeed $7$-chromatic.
\end{proof}

We also tried the
method of recursively adding and removing edges (as long as the graphs stay
$7$-chromatic and triangle-free) from~\cite{goedgebeur2017} on the $7$-chromatic graphs of order 77 from Theorem~\ref{thm:ub_n47}. This yielded several additional
$7$-chromatic graphs, but all of them were $7$-vertex-critical. The adjacency
list of the most symmetric triangle-free $7$-chromatic graph on $77$ vertices we found (i.e., a graph with an automorphism group of size 10) is listed in the Appendix. This graph can also
be downloaded from the database of interesting graphs from the \textit{House of
Graphs}~\cite{hog} by searching for the keywords ``triangle-free $7$-chromatic''\footnote{This graph can also be accessed directly at \url{https://hog.grinvin.org/ViewGraphInfo.action?id=30631}}. 

The bound $n_4(8) \leq 155$ from Table~\ref{table:bounds} is obtained by applying the Mycielski construction to one of our triangle-free $7$-chromatic graphs of order 77. (We could not apply Droogendijk's procedure here to obtain a better bound since the graphs are too big to perform the chromatic number computations in reasonable time).

The 750\,000 triangle-free $6$-chromatic graphs on $40$ vertices
from~\cite{goedgebeur2017} all have an automorphism group of size 1 or 2. Using
the LCF method (see Section~\ref{subsect:UB_higher_girth}) we were able to
obtain a triangle-free $6$-chromatic graph on $40$ vertices with an
automorphism group of size 10. It can be found in the Appendix or inspected at
the \textit{House of
Graphs}~\cite{hog} by searching for the keywords ``triangle-free $6$-chromatic * groupsize 10''\footnote{This graph can also be accessed directly at \url{https://hog.grinvin.org/ViewGraphInfo.action?id=30633}}.

\subsection{Constructions for $k$-chromatic graphs of girth at least 5}
\label{subsect:UB_higher_girth}

For girth larger than four, much less is known.  The only specific
value of $n_g(k)$ is $n_5(4) = 21$, due to the Brinkmann graph~\cite{brinkmann1997smallest}
which established $n_5(4) \leq 21$,
and Royle~\cite{royle2015} who established the exact value.



Attempting to search for graphs with girth $g > 4$ and chromatic number $k > 3$ requires
considering larger graphs.  
It was evident that any such example graph would be so large
that it would not be feasible
to check all graphs of the relevant orders.
So we considered some smaller search spaces, as has been
done for some related problems.
For example, the
early results on Ramsey numbers~\cite{kalb} were obtained by
limiting searches to \textit{circulant}
graphs, i.e., graphs admitting a cyclic automorphism of degree $n$.
Other searches, including those for cages
and for the degree-diameter problem,
focused on Cayley graphs and voltage graphs \cite{exoo2008dynamic,ddmdynamic}.

Following suit we began by looking at Cayley graphs.
For $4$-chromatic graphs of girth~$6$, the smallest Cayley graph we found
has order 96.  This $5$-regular graph is generated by the following
three permutations of degree $12$.  

\vspace{2mm}
{\tt
\begin{tabular}{l}
(1, 4)(3, 7)(5, 10)(8, 12) \\
(1, 6, 7,11, 4, 2, 3, 9)(5, 12, 10, 8) \\
(1, 3, 4, 7)(2, 5, 11, 8, 6, 10, 9, 12)
\end{tabular}
}
\vspace{2mm}

The automorphism group of the
graph has order 384.  So the stabilizer of a vertex is a Klein $4$-group.
For a given vertex $v$, any neighbour of $v$ by way of a noninvolutory edge
can be mapped to any other such neighbour by an automorphism that fixes $v$.

In order to find smaller examples, we expanded the search to
include voltage graphs.  Some smaller graphs were obtained, and we noticed
that, unlike the example above, in each case the automorphism group of the graph
had a trivial vertex stabilizer.  As a result, we decided to focus the search on
exactly those graphs, i.e.,
graphs that have a semiregular automorphism group\footnote{Recall that a permutation is {\em semiregular} if all
of its cycles have the same length.} and whose vertex orbits have lengths approximately
$n/g$, where $n$ is the order and $g$ the girth.
Such graphs have been a subject of interest due to the {\em polycirculant
conjecture}~\cite{maruvsivc1981vertex}, which asserts that every vertex-transitive
digraph has a semiregular automorphism (see~\cite{semireg2014} for a nice
summary of progress on this topic).

Cubic graphs with semiregular automorphisms have been studied before, and
called {\em LCF graphs}, because they
were originally considered by Lederburg, Coxeter and Frucht~\cite{zerosymmetric}.
Their construction pertains to cubic graphs, but the idea is easily generalised.
So for convenience, and succinctness, we refer to a graph of composite order $n = r s$
that has a semiregular
automorphism composed of $r$ cycles of length $s$ as an $LCF(r,s)$ graph.
We label the vertices of such a graph as
\[
\{v_{i + r j} \ | \ 0 \leq i < r \ and\ 0 \leq j < s\}.
\]
\noindent
Thus the vertex orbits under the action of the group generated by the semiregular automorphism
are of the form

\[
\{v_{i + r j} \ |\  0 \leq j < s \}, \mbox{ for } 0 \leq i < r. 
\]

\noindent
The sets of potential edges are then partitioned into orbits of the form
\[
\{(v_{i + r j},v_{i + r j + t}) \ |\  0 \leq j < s \}
\]
for $ 1 \leq t \leq  n/2$.  All subscript addition is done modulo $rs$.

\subsubsection{Description of the LCF search method}
\label{sect:lcf_description}


We now outline our LCF search method which we will use to establish new upper bounds for the order of the smallest $4$-chromatic graphs of girth $g \geq 6$. (Note that this method also works for graphs with girth less than 6, cf.\ our new LCF triangle-free 6-chromatic graph on 40 vertices from Section~\ref{subsect:UB_tf}).

The biggest obstacle to a successful search is the fact that we ultimately
must compute the chromatic number, an NP-hard problem, of any candidate
graphs we find.
Consider the $4$-chromatic, girth $7$, case.
Here we were searching through graphs whose
orders are approximately $200$.
Searching through LCF graphs of these orders requires considering millions of graphs ({\em very} conservatively).
Determining whether or not one of these graphs has a $3$-colouring may
take several seconds.  Hence it is not feasible to precisely determine the
chromatic number of every graph we consider.
A fast  approximate colouring procedure was needed.
The procedure is a modified version
of the procedure used in the context of $4$-chromatic triangle-free unit-distance
graphs by the first author~\cite{unitdisttrifree}.  This procedure almost always predicts
the chromatic number correctly.   For graphs with orders $100$ to $300$
we know of $5$ cases (out of perhaps billions) where the procedure was wrong.
In each case the correct answer was determined by running the procedure twice.

Two versions of the main search program were designed: one to do complete searches
for $LCF(r,s)$ graphs, for given $r$ and $s$, and one which uses heuristics
to handle larger cases.  We describe the latter, more successful, version.
The first version of the procedure uses three external functions.

\begin{description}
\item [] $ randomColourable(k,G) $: The randomised colouring function that attempts
to colour the graph $G$ using $k$ colours.  Returns {\tt true} if a $k$-colouring of $G$ was found and {\tt false} if no $k$-colouring was found.
\item [] $ containsSmallCycles(g,G) $: Checks whether the graph $G$ contains any cycles whose length is {\em less than} $g$. Returns {\tt true} if the graph contains such a cycle, else returns {\tt false}.
\item [] $ getOrbits(r,s)$: A function that finds all possible semiregular
orbits for an $LCF(r,s)$ graph with a labelling as given earlier in Section~\ref{subsect:UB_higher_girth}.
\end{description}

The goal of the procedure is to find an $LCF(r,s)$ graph of girth at least $g$ that the
$randomColourable$ function fails to colour.  Such a graph is then a candidate to be
checked with a program that does an exhaustive search for colourings.
The general structure of the procedure is given in Algorithm~\ref{lcfsearch1}. Here $E(G)$ denotes
the edge set of $G$.

\begin{algorithm}[htb!]
\caption{Basic LCF Search}\label{lcfsearch1}
\begin{algorithmic}[1]
\Procedure{BasicSearch}{girth $g$, $r$, $s$}
\State $olist \gets getOrbits(r,s)$
\While {true}
\State Shuffle $olist$
\State $E(G) \gets \varnothing$
\For {$orb \in olist$}
\State $E(G) \gets E(G) \cup orb$
\If {$containsSmallCycles(g,G)$}
\State $E(G) \gets E(G) - orb$
\EndIf
\EndFor
\If {$\textit{not} \; randomColourable(3,G)$}
\State \Return{$G$}
\EndIf
\EndWhile
\EndProcedure
\end{algorithmic}
\end{algorithm}

This procedure is not capable of producing the graphs from Section~\ref{sect:results_LCF} which establish new upper bounds  without some refinement.  We will consider the case of even girth,
which is the more difficult case.
Intuitively the difficulty arises because to increase the chromatic number
of a graph, one needs to add a lot of edges;
but for even girth, the most effective way to add a lot of edges
is to create a bipartite graph.
Somehow our procedure must avoid the tendency to produce bipartite graphs.
One way to accomplish this is to attempt to maximise the number of odd cycles.
Counting all odd cycles is a prohibitively expensive computation,
so we focus on $g+1$-cycles.  The second procedure uses three new functions.

\begin{description}
\item [] $ bestOrbits(olist,G) $: Returns a list of the orbits
that can be added to the graph without creating any short cycles, but which
create the maximum number of new $g+1$ cycles.
\item [] $ updateOrbits(oldOrbitList,newOrbit,G) $: Returns a list of the orbits
in oldOrbitList than can be added to $G$ without creating any short cycles.  Since
orbits are added to the graph one at a time, knowledge of the most recently
added orbit is useful for efficiency.
\item [] $ randomChoice(list) $: Returns a random element of $list$.
\end{description}

The modified version of the procedure is given in Algorithm~\ref{lcfsearch2}.

\begin{algorithm}[htb!]
\caption{Even Girth LCF Search}\label{lcfsearch2}
\begin{algorithmic}[1]
\Procedure{EvenGirthSearch}{girth $g$, $r$, $s$}
\State $olist \gets getOrbits(r,s)$
\While {true}
\State $tmplist \gets olist$
\State $E(G) \gets \varnothing$
\While {$tmplist \neq \varnothing$}
\State $bestlist \gets bestOrbits(tmplist,G)$
\State $orb \gets randomChoice(bestlist)$
\State $E(G) \gets E(G) \cup orb$
\State $tmplist \gets updateOrbits(tmplist,orb,G)$
\EndWhile
\If {$not \; randomColourable(3,G)$}
\State \Return{$G$}
\EndIf
\EndWhile
\EndProcedure
\end{algorithmic}
\end{algorithm}

This gives the general idea of the search method.  In the interest of efficiency
a couple of heuristics were added.  First, instead of always using the $bestOrbits$
function in the inner {\tt while} loop in Algorithm~\ref{lcfsearch2}, some fraction of the time a random element
was chosen from $tmplist$.  This avoids calls to $bestOrbits$ where most of the
processor time is spent.  It also mitigates against any tendency for the outer
{\tt while} loop to repeatedly check the same graph.

A second modification is to require that chromatic number $3$ is
achieved early in the process.  So after a specified number of edges have been
added in the inner {\tt while} loop, we check whether any odd cycles have yet
appeared in the graph.  If not, we break out of the loop and restart the outer
loop.


\subsubsection{Results obtained using the LCF search method}
\label{sect:results_LCF}

The first new result we obtained using the LCF search method
is an order 66 $LCF(6,11)$ graph with chromatic number $4$
and girth $6$, significantly smaller than our Cayley graph of order 96 which we described at the beginning of Section~\ref{subsect:UB_higher_girth}.
The graph is given in Table~\ref{table:ub_n64}.  The table of an $LCF(r,s)$ graph should be
interpreted as follows.
The rows of the table are
labelled from $0$ to $r-1$.  An entry of $t$ in row $i$ indicates an orbit of
the type specified at the beginning of Section~\ref{subsect:UB_higher_girth}: the
graph contains the edges $(v_{i + r j}, v_{i + r j + t})$ for $0 \leq j < s$.
(Recall that the subscript addition $i + rj + t$ is done modulo $rs$, so it also makes sense when $t$ is negative).
Some of the entries in the table are redundant, for example, the $1$ entry in
row $0$ determines the same set of edges as the $-1$ entry in row~$1$.
However the redundancy makes it clear that the
graph is $5$-regular.

\begin{table}[ht!b]
\centering
\begin{tabular}{r r r r r r}
0:  &  1  &  6  &  $-23$  &  $-6$  &  $-1$  \\
1:  &  1  &  9  &  14  &  23  &  $-1$  \\
2:  &  1  &  26  &  33  &  $-10$  &  $-1$  \\
3:  &  1  &  18  &  $-18$  &  $-14$  &  $-1$  \\
4:  &  1  &  10  &  $-26$  &  $-9$  &  $-1$  \\
5:  &  1  &  18  &  33  &  $-18$  &  $-1$  \\
\end{tabular}

\caption{An $LCF(6,11)$ graph on 66 vertices with chromatic number $4$ and girth $6$ listed in LCF format.}

\label{table:ub_n64}
\end{table}

This graph has 66 vertices and is small enough that its chromatic number can be  verified
using any of the standard symbolic Mathematics software packages
(Sage, Maple, Mathematica).  We believe that this is the smallest known
$4$-chromatic graph of girth $6$ and thus yields the following upper bound for $n_6(4)$.


\begin{theorem}\label{thm:ub_n64}
$n_6(4) \leq 66$.
\end{theorem}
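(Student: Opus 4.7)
The plan is to take the explicit graph $G$ described in LCF format in the preceding table as the witness. Since $G$ has order $6 \cdot 11 = 66$, it suffices to verify two properties: that the girth of $G$ is at least $6$, and that $\chi(G) = 4$.

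First, I would decode the table into an explicit edge set. Under the semiregular automorphism $\sigma$ of order $11$ that sends $v_i \mapsto v_{i+6}$ (subscripts mod $66$), the edges split into six orbits of length $11$, one per row of the table; each row contains five entries, which confirms that $G$ is $5$-regular. To check that the girth is at least $6$, I would run a breadth-first search of depth at most $3$ from each vertex and verify that no cycle of length $3$, $4$ or $5$ closes. Using the $\sigma$-symmetry one actually only has to perform the search from one vertex in each of the six orbits, so this step is entirely routine for a graph on $66$ vertices.

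The substance of the argument lies in verifying that $\chi(G) = 4$. The upper bound $\chi(G) \leq 4$ is immediate from producing any explicit proper $4$-colouring, which a standard greedy or DSatur colouring routine finds in negligible time. The nontrivial direction is $\chi(G) \geq 4$, i.e., that $G$ admits no proper $3$-colouring. This is the main obstacle, since $3$-colourability is NP-hard in general; however, on only $66$ vertices a branch-and-bound backtracking search (equivalently, a SAT encoding) terminates quickly, and one can prune further by fixing the colours along one $\sigma$-orbit and exploiting that any $3$-colouring of a $5$-regular graph of girth $6$ severely constrains the neighbourhood colour patterns.

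As a final remark on reproducibility, I would note that $G$ is small enough that the whole verification can be checked independently in any standard symbolic mathematics package (Sage, Maple, or Mathematica), so no bespoke code is needed and the bound $n_6(4) \leq 66$ follows.
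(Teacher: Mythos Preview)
Your proposal is correct and matches the paper's approach: the paper likewise exhibits the $LCF(6,11)$ graph on $66$ vertices as a witness and simply asserts that its girth and chromatic number can be verified with any standard symbolic mathematics package (Sage, Maple, Mathematica). If anything, you supply more detail than the paper does on how the girth and $3$-colourability checks would actually be carried out.
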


For comparison purpose we note that the smallest $4$-chromatic graph of girth $6$ obtained by Descartes'
construction~\cite{descartes1954} has $352\,735$ vertices. 

The next result deals with $4$-chromatic graphs of girth 7.  The construction we
present has 171 vertices and is an $LCF(9,19)$ graph and is listed in Table~\ref{table:ub_n74}.

\begin{table}[ht!b]
\centering
\begin{tabular}{r r r r r r}
0:  &  1  &  72  &  $-72$  &  $-13$  &  $-1$  \\
1:  &  1  &  77  &  $-68$  &  $-34$  &  $-1$  \\
2:  &  1  &  14  &  67  &  $-85$  &  $-1$  \\
3:  &  1  &  23  &  34  &  55  &  $-1$  \\
4:  &  1  &  38  &  $-55$  &  $-8$  &  $-1$  \\
5:  &  1  &  8  &  13  &  68  &  $-1$  \\
6:  &  1  &  $-77$  &  $-67$  &  $-38$  &  $-1$  \\
7:  &  1  &  46  &  85  &  $-14$  &  $-1$  \\
8:  &  1  &  $-46$  &  $-23$  &  $-1$  &    \\
\end{tabular}

\caption{An $LCF(9,19)$ graph on 171 vertices with chromatic number $4$ and girth $7$ listed in LCF format.}

\label{table:ub_n74}
\end{table}

Using Sage, it takes approximately one hour to verify the chromatic number of this graph.
This leads to the following new bound for $n_7(4)$.


\begin{theorem}\label{thm:ub_n74}
$n_7(4) \leq 171$.
\end{theorem}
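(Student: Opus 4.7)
The plan is to exhibit the explicit graph $G$ described in Table~\ref{table:ub_n74} and verify that it has order $171$, girth at least $7$, and chromatic number exactly $4$. The first claim is immediate: under the $LCF(r,s)$ convention from Section~\ref{subsect:UB_higher_girth}, the vertex set is $\{v_{i+rj} \mid 0 \le i < 9,\ 0 \le j < 19\}$, which has $9 \cdot 19 = 171$ elements. Showing $\chi(G) \le 4$ can be done by displaying any proper $4$-colouring, which is easy to find (for instance, the colouring returned by any greedy or backtracking procedure) and can be mechanically checked against the edge list; even the existence of any $4$-colouring guarantees $\chi(G) \le 4$.

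Next I would verify that $G$ has girth at least $7$. Since $G$ admits the cyclic semiregular automorphism of order $19$ that shifts subscripts by $r = 9$, every cycle in $G$ has a translate beginning at any chosen vertex, so it suffices to enumerate all cycles of length $\le 6$ starting from a fixed representative $v_i$ in each of the nine orbits. With $G$ being $5$-regular and having only nine orbit representatives, a breadth-first search to depth $3$ from each representative is entirely routine and can be written out explicitly; one checks that no two distinct length-$\le 3$ walks from a representative end at the same vertex (ruling out cycles of length $\le 6$). Equivalently, one lists all length-$t$ sums in $\mathbb{Z}/171\mathbb{Z}$ of the generators $\{\pm 1,\pm 13,\pm 72\}$, $\{\pm 1,\pm 34,\pm 68,\pm 77\}$, etc.\ with signs and orbit-indices tracked, and verifies that no nontrivial closed walk of length between $3$ and $6$ exists; this is a small finite check.

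The main obstacle is the lower bound $\chi(G) \ge 4$, i.e.\ proving that $G$ is not $3$-colourable. This cannot be done by a short human-readable case analysis; my plan is to run an exact colouring procedure (for example, a DPLL-style backtracking SAT solver applied to the natural $3$-colouring formula, or a branch-and-bound using symmetry breaking provided by the cyclic automorphism). To gain confidence and guard against bugs, I would follow the same methodology used elsewhere in the paper: implement two independent algorithms (for instance, one reduction to SAT and one direct backtracking with vertex-ordering heuristics), fix a root vertex using the $19$-fold symmetry to cut the search, and confirm both return \emph{unsatisfiable}. The authors' estimate of roughly one hour in Sage indicates the computation is tractable; symmetry-breaking using the semiregular automorphism should make it noticeably faster. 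The combination of $|V(G)|=171$, girth $\ge 7$, and $\chi(G)=4$ then yields $n_7(4) \le 171$.
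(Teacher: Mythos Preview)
Your proposal is correct and takes essentially the same approach as the paper: exhibit the explicit $LCF(9,19)$ graph from Table~\ref{table:ub_n74} and verify its order, girth, and chromatic number by direct computation (the paper likewise relies on computer verification, citing roughly one hour in Sage plus independent cross-checks by both authors). One small slip worth correcting: the graph is not $5$-regular---row~$8$ of Table~\ref{table:ub_n74} has only four entries, so vertices in that orbit have degree~$4$---and your listed offset set $\{\pm 1,\pm 13,\pm 72\}$ for orbit~$0$ should be $\{\pm 1,-13,\pm 72\}$; neither affects the validity of your plan.
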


The graphs from Table~\ref{table:ub_n64} and~\ref{table:ub_n74} can also
be downloaded from the database of interesting graphs from the \textit{House of
Graphs}~\cite{hog} by searching for the keywords ``$4$-chromatic girth 6'' and
``$4$-chromatic girth 7''\footnote{These graphs can also be accessed directly at \url{https://hog.grinvin.org/ViewGraphInfo.action?id=30637} and \url{https://hog.grinvin.org/ViewGraphInfo.action?id=30639}},
respectively. We also verified that these graphs are vertex-critical.

The next case we consider is $n_5(5)$.  The smallest example we have been able to
find is a Cayley graph of order $80$,  first constructed by Royle (personal communication),
in the context of the
cage problem~\cite{exoo2008dynamic}.  It is the smallest known regular
graph of degree $8$ and girth~$5$. 
Our LCF search program was able to independently reproduce
this graph and to determine that it is $5$-chromatic and was unable to find any smaller examples. So we have $n_5(5) \leq 80$.
This graph is listed in LCF format in Table~\ref{table:ub_n55} and it can also be downloaded from the
\textit{House of Graphs}~\cite{hog} by searching for the keywords ``$5$-chromatic girth
5''\footnote{This graph can also be accessed directly at \url{https://hog.grinvin.org/ViewGraphInfo.action?id=30635}}.

\begin{table}[ht!b]
\begin{center}
\begin{tabular}{r r r r r r r r r r r r r r}
0: & 1 & 19 & 32 & -35 & -32 & -27 & -23 & -1 \\
1: & 1 & 16 & 23 & 27 & 35 & -19 & -16 & -1 \\ 
2: & 1 & 5 & 13 & 19 & 32 & -32 & -23 & -1 \\ 
3: & 1 & 16 & 23 & -19 & -16 & -13 & -5 & -1 \\
\end{tabular}
\end{center}
\caption{An $LCF(4,20)$ graph on 80 vertices with chromatic number $5$ and girth $5$ listed in LCF format.}

\label{table:ub_n55}
\end{table}

In addition to the graphs reported above, several good candidates for other cases
were found.  Unfortunately these graphs seem too large to have their chromatic number
precisely determined in a reasonable amount of time. One of these graphs is listed in Table~\ref{table:lcf_355v}: a graph of girth 5 on 355 vertices which we suspect to be $6$-chromatic.

\begin{table}[ht!b]

\begin{center}
\begin{tabular}{r r r r r r r r r r r r r r}
0: & 1 & 24 & 45 & 61 & 101 & 128 & -148 & -82 & -79 & -69 & -64 & -45 & -1 \\
1: & 1 & 64 & 69 & 79 & 96 & 155 & 177 & -155 & -123 & -101 & -61 & -7 & -1 \\
2: & 1 & 17 & 27 & 36 & 47 & 51 & 90 & 148 & -168 & -108 & -96 & -90 & -1 \\
3: & 1 & 41 & 70 & 82 & 123 & 131 & -177 & -128 & -70 & -51 & -36 & -1 &  \\
4: & 1 & 7 & 108 & 168 & 175 & -175 & -131 & -47 & -41 & -27 & -24 & -17 & -1 \\
\end{tabular}
\end{center}

\caption{An $LCF(5,71)$ graph of girth 5 on 355 vertices listed in LCF format which is possibly 6-chromatic.}

\label{table:lcf_355v}
\end{table}

\section{Open problems}
\label{sect:open_problems}

We conclude with the following open problems. 

\begin{question}
Does every smallest $k$-chromatic graph of girth at least $g$ have girth equal to~$g$?
\end{question}

The analogous question for cages (smallest regular graphs of given degree and girth) was
answered positively by Erd\H{o}s and Sachs \cite{erdossachs}.
They showed that for degree $d \ge 3$ and girth $g \ge 3$, 
a smallest regular graph of degree $d$ and girth at least $g$ has girth exactly $g$.

\begin{question}
Is there a construction from which it follows that $n_g(k + 1) \leq c \cdot n_g(k)$ for a constant $c$ and $g \geq 5$?
\end{question}

Recall that for $g=4$ it follows from the Mycielski construction that $n_4(k+1) \leq 2n_4(k) + 1$.


\acknowledgements
\label{sec:ack}
We would like to thank Staszek Radziszowski as well as the anonymous referees for useful suggestions.
Most of the computations were carried out using the Stevin Supercomputer Infrastructure at Ghent University and the computers at the Computer Science labs
at Indiana State University.


\bibliographystyle{plain}
\bibliography{references}
\label{sec:biblio}


\section*{Appendix}

Note: the adjacency lists of the graphs which establish a new upper bound for $n_g(k)$ can also be found in the ``ancillary material" of this paper on arXiv.

\subsection*{A triangle-free 6-chromatic graph on 40 vertices}

Below is one of the triangle-free 6-chromatic graphs on 40 vertices. It is an $LCF(8,5)$ graph and has an automorphism group of size 10. This graph is listed in \textit{LCF format} to keep things concise. Please refer to Section~\ref{subsect:UB_higher_girth} for the definition of this format.

\begin{center}
\begin{tabular}{r r r r r r r r r r r r r r}
0:  &  1  &  5  &  14  &  16  &  $-18$  &  $-16$  &  $-12$  &  $-9$  &  $-7$  &  $-4$  &  $-1$  &    &   \\
1:  &  1  &  7  &  9  &  17  &  20  &  $-15$  &  $-12$  &  $-7$  &  $-1$  &    &    &    &   \\
2:  &  1  &  5  &  7  &  10  &  13  &  15  &  18  &  20  &  $-17$  &  $-12$  &  $-9$  &  $-7$  &  $-1$ \\
3:  &  1  &  3  &  7  &  16  &  18  &  $-16$  &  $-14$  &  $-12$  &  $-4$  &  $-1$  &    &    &   \\
4:  &  1  &  4  &  12  &  16  &  19  &  $-18$  &  $-16$  &  $-10$  &  $-7$  &  $-1$  &    &    &   \\
5:  &  1  &  7  &  12  &  14  &  16  &  20  &  $-18$  &  $-16$  &  $-7$  &  $-5$  &  $-1$  &    &   \\
6:  &  1  &  7  &  12  &  16  &  18  &  20  &  $-16$  &  $-14$  &  $-3$  &  $-1$  &    &    &   \\
7:  &  1  &  4  &  9  &  12  &  16  &  $-19$  &  $-16$  &  $-13$  &  $-5$  &  $-1$  &    &    &   \\
\end{tabular}
\end{center}

\subsection*{A triangle-free 7-chromatic graph on 77 vertices}

Below is the adjacency list of one of the triangle-free 7-chromatic graphs on 77 vertices which yields the upper bound from Theorem~\ref{thm:ub_n47}. It has an automorphism group of size 10.

\begin{center}
\setlength{\tabcolsep}{2pt} 
\renewcommand{\arraystretch}{0.9}
\small
\begin{longtable}{r r r r r r r r r r r r r r r r r r r r r r r r r r r}
0: & 25 & 29 & 31 & 33 & 35 & 36 & 37 & 38 & 39 & 60 & 64 & 66 & 68 & 70 & 71 & 72 & 73 & 74 & 75 &  &  &  &  &  &  &  \\
1: & 28 & 29 & 30 & 31 & 35 & 36 & 37 & 38 & 39 & 63 & 64 & 65 & 66 & 70 & 71 & 72 & 73 & 74 & 75 &  &  &  &  &  &  &  \\
2: & 25 & 27 & 32 & 33 & 35 & 36 & 37 & 38 & 39 & 60 & 62 & 67 & 68 & 70 & 71 & 72 & 73 & 74 & 75 &  &  &  &  &  &  &  \\
3: & 26 & 28 & 30 & 34 & 35 & 36 & 37 & 38 & 39 & 61 & 63 & 65 & 69 & 70 & 71 & 72 & 73 & 74 & 75 &  &  &  &  &  &  &  \\
4: & 26 & 27 & 32 & 34 & 35 & 36 & 37 & 38 & 39 & 61 & 62 & 67 & 69 & 70 & 71 & 72 & 73 & 74 & 75 &  &  &  &  &  &  &  \\
5: & 8 & 9 & 10 & 24 & 26 & 28 & 32 & 34 & 37 & 39 & 43 & 44 & 45 & 59 & 61 & 63 & 67 & 69 & 72 & 74 &  &  &  &  &  &  \\
6: & 7 & 9 & 14 & 21 & 26 & 27 & 32 & 33 & 35 & 39 & 42 & 44 & 49 & 56 & 61 & 62 & 67 & 68 & 70 & 74 &  &  &  &  &  &  \\
7: & 6 & 8 & 13 & 23 & 28 & 29 & 30 & 34 & 37 & 38 & 41 & 43 & 48 & 58 & 63 & 64 & 65 & 69 & 72 & 73 &  &  &  &  &  &  \\
8: & 5 & 7 & 12 & 20 & 25 & 27 & 31 & 33 & 35 & 36 & 40 & 42 & 47 & 55 & 60 & 62 & 66 & 68 & 70 & 71 &  &  &  &  &  &  \\
9: & 5 & 6 & 11 & 22 & 25 & 29 & 30 & 31 & 36 & 38 & 40 & 41 & 46 & 57 & 60 & 64 & 65 & 66 & 71 & 73 &  &  &  &  &  &  \\
10: & 5 & 11 & 12 & 15 & 20 & 21 & 30 & 31 & 35 & 36 & 40 & 46 & 47 & 50 & 55 & 56 & 65 & 66 & 70 & 71 &  &  &  &  &  &  \\
11: & 9 & 10 & 14 & 17 & 23 & 24 & 32 & 33 & 37 & 39 & 44 & 45 & 49 & 52 & 58 & 59 & 67 & 68 & 72 & 74 &  &  &  &  &  &  \\
12: & 8 & 10 & 13 & 16 & 22 & 23 & 32 & 34 & 37 & 38 & 43 & 45 & 48 & 51 & 57 & 58 & 67 & 69 & 72 & 73 &  &  &  &  &  &  \\
13: & 7 & 12 & 14 & 19 & 21 & 24 & 31 & 33 & 35 & 39 & 42 & 47 & 49 & 54 & 56 & 59 & 66 & 68 & 70 & 74 &  &  &  &  &  &  \\
14: & 6 & 11 & 13 & 18 & 20 & 22 & 30 & 34 & 36 & 38 & 41 & 46 & 48 & 53 & 55 & 57 & 65 & 69 & 71 & 73 &  &  &  &  &  &  \\
15: & 10 & 16 & 17 & 22 & 25 & 27 & 32 & 33 & 37 & 38 & 45 & 51 & 52 & 57 & 60 & 62 & 67 & 68 & 72 & 73 &  &  &  &  &  &  \\
16: & 12 & 15 & 19 & 24 & 28 & 29 & 30 & 31 & 35 & 39 & 47 & 50 & 54 & 59 & 63 & 64 & 65 & 66 & 70 & 74 &  &  &  &  &  &  \\
17: & 11 & 15 & 18 & 21 & 26 & 28 & 30 & 34 & 35 & 36 & 46 & 50 & 53 & 56 & 61 & 63 & 65 & 69 & 70 & 71 &  &  &  &  &  &  \\
18: & 14 & 17 & 19 & 23 & 25 & 29 & 31 & 33 & 37 & 39 & 49 & 52 & 54 & 58 & 60 & 64 & 66 & 68 & 72 & 74 &  &  &  &  &  &  \\
19: & 13 & 16 & 18 & 20 & 26 & 27 & 32 & 34 & 36 & 38 & 48 & 51 & 53 & 55 & 61 & 62 & 67 & 69 & 71 & 73 &  &  &  &  &  &  \\
20: & 8 & 10 & 14 & 19 & 23 & 24 & 28 & 29 & 37 & 39 & 43 & 45 & 49 & 54 & 58 & 59 & 63 & 64 & 72 & 74 &  &  &  &  &  &  \\
21: & 6 & 10 & 13 & 17 & 22 & 23 & 25 & 29 & 37 & 38 & 41 & 45 & 48 & 52 & 57 & 58 & 60 & 64 & 72 & 73 &  &  &  &  &  &  \\
22: & 9 & 12 & 14 & 15 & 21 & 24 & 26 & 28 & 35 & 39 & 44 & 47 & 49 & 50 & 56 & 59 & 61 & 63 & 70 & 74 &  &  &  &  &  &  \\
23: & 7 & 11 & 12 & 18 & 20 & 21 & 26 & 27 & 35 & 36 & 42 & 46 & 47 & 53 & 55 & 56 & 61 & 62 & 70 & 71 &  &  &  &  &  &  \\
24: & 5 & 11 & 13 & 16 & 20 & 22 & 25 & 27 & 36 & 38 & 40 & 46 & 48 & 51 & 55 & 57 & 60 & 62 & 71 & 73 &  &  &  &  &  &  \\
25: & 0 & 2 & 8 & 9 & 15 & 18 & 21 & 24 & 26 & 28 & 34 & 43 & 44 & 50 & 53 & 56 & 59 & 61 & 63 & 69 &  &  &  &  &  &  \\
26: & 3 & 4 & 5 & 6 & 17 & 19 & 22 & 23 & 25 & 29 & 31 & 40 & 41 & 52 & 54 & 57 & 58 & 60 & 64 & 66 &  &  &  &  &  &  \\
27: & 2 & 4 & 6 & 8 & 15 & 19 & 23 & 24 & 28 & 29 & 30 & 41 & 43 & 50 & 54 & 58 & 59 & 63 & 64 & 65 &  &  &  &  &  &  \\
28: & 1 & 3 & 5 & 7 & 16 & 17 & 20 & 22 & 25 & 27 & 33 & 40 & 42 & 51 & 52 & 55 & 57 & 60 & 62 & 68 &  &  &  &  &  &  \\
29: & 0 & 1 & 7 & 9 & 16 & 18 & 20 & 21 & 26 & 27 & 32 & 42 & 44 & 51 & 53 & 55 & 56 & 61 & 62 & 67 &  &  &  &  &  &  \\
30: & 1 & 3 & 7 & 9 & 10 & 14 & 16 & 17 & 27 & 32 & 33 & 42 & 44 & 45 & 49 & 51 & 52 & 62 & 67 & 68 &  &  &  &  &  &  \\
31: & 0 & 1 & 8 & 9 & 10 & 13 & 16 & 18 & 26 & 32 & 34 & 43 & 44 & 45 & 48 & 51 & 53 & 61 & 67 & 69 &  &  &  &  &  &  \\
32: & 2 & 4 & 5 & 6 & 11 & 12 & 15 & 19 & 29 & 30 & 31 & 40 & 41 & 46 & 47 & 50 & 54 & 64 & 65 & 66 &  &  &  &  &  &  \\
33: & 0 & 2 & 6 & 8 & 11 & 13 & 15 & 18 & 28 & 30 & 34 & 41 & 43 & 46 & 48 & 50 & 53 & 63 & 65 & 69 &  &  &  &  &  &  \\
34: & 3 & 4 & 5 & 7 & 12 & 14 & 17 & 19 & 25 & 31 & 33 & 40 & 42 & 47 & 49 & 52 & 54 & 60 & 66 & 68 &  &  &  &  &  &  \\
35: & 0 & 1 & 2 & 3 & 4 & 6 & 8 & 10 & 13 & 16 & 17 & 22 & 23 & 41 & 43 & 45 & 48 & 51 & 52 & 57 & 58 &  &  &  &  &  \\
36: & 0 & 1 & 2 & 3 & 4 & 8 & 9 & 10 & 14 & 17 & 19 & 23 & 24 & 43 & 44 & 45 & 49 & 52 & 54 & 58 & 59 &  &  &  &  &  \\
37: & 0 & 1 & 2 & 3 & 4 & 5 & 7 & 11 & 12 & 15 & 18 & 20 & 21 & 40 & 42 & 46 & 47 & 50 & 53 & 55 & 56 &  &  &  &  &  \\
38: & 0 & 1 & 2 & 3 & 4 & 7 & 9 & 12 & 14 & 15 & 19 & 21 & 24 & 42 & 44 & 47 & 49 & 50 & 54 & 56 & 59 &  &  &  &  &  \\
39: & 0 & 1 & 2 & 3 & 4 & 5 & 6 & 11 & 13 & 16 & 18 & 20 & 22 & 40 & 41 & 46 & 48 & 51 & 53 & 55 & 57 &  &  &  &  &  \\
40: & 8 & 9 & 10 & 24 & 26 & 28 & 32 & 34 & 37 & 39 & 75 & 76 &  &  &  &  &  &  &  &  &  &  &  &  &  &  \\
41: & 7 & 9 & 14 & 21 & 26 & 27 & 32 & 33 & 35 & 39 & 75 & 76 &  &  &  &  &  &  &  &  &  &  &  &  &  &  \\
42: & 6 & 8 & 13 & 23 & 28 & 29 & 30 & 34 & 37 & 38 & 75 & 76 &  &  &  &  &  &  &  &  &  &  &  &  &  &  \\
43: & 5 & 7 & 12 & 20 & 25 & 27 & 31 & 33 & 35 & 36 & 75 & 76 &  &  &  &  &  &  &  &  &  &  &  &  &  &  \\
44: & 5 & 6 & 11 & 22 & 25 & 29 & 30 & 31 & 36 & 38 & 75 & 76 &  &  &  &  &  &  &  &  &  &  &  &  &  &  \\
45: & 5 & 11 & 12 & 15 & 20 & 21 & 30 & 31 & 35 & 36 & 75 & 76 &  &  &  &  &  &  &  &  &  &  &  &  &  &  \\
46: & 9 & 10 & 14 & 17 & 23 & 24 & 32 & 33 & 37 & 39 & 75 & 76 &  &  &  &  &  &  &  &  &  &  &  &  &  &  \\
47: & 8 & 10 & 13 & 16 & 22 & 23 & 32 & 34 & 37 & 38 & 75 & 76 &  &  &  &  &  &  &  &  &  &  &  &  &  &  \\
48: & 7 & 12 & 14 & 19 & 21 & 24 & 31 & 33 & 35 & 39 & 75 & 76 &  &  &  &  &  &  &  &  &  &  &  &  &  &  \\
49: & 6 & 11 & 13 & 18 & 20 & 22 & 30 & 34 & 36 & 38 & 75 & 76 &  &  &  &  &  &  &  &  &  &  &  &  &  &  \\
50: & 10 & 16 & 17 & 22 & 25 & 27 & 32 & 33 & 37 & 38 & 75 & 76 &  &  &  &  &  &  &  &  &  &  &  &  &  &  \\
51: & 12 & 15 & 19 & 24 & 28 & 29 & 30 & 31 & 35 & 39 & 75 & 76 &  &  &  &  &  &  &  &  &  &  &  &  &  &  \\
52: & 11 & 15 & 18 & 21 & 26 & 28 & 30 & 34 & 35 & 36 & 75 & 76 &  &  &  &  &  &  &  &  &  &  &  &  &  &  \\
53: & 14 & 17 & 19 & 23 & 25 & 29 & 31 & 33 & 37 & 39 & 75 & 76 &  &  &  &  &  &  &  &  &  &  &  &  &  &  \\
54: & 13 & 16 & 18 & 20 & 26 & 27 & 32 & 34 & 36 & 38 & 75 & 76 &  &  &  &  &  &  &  &  &  &  &  &  &  &  \\
55: & 8 & 10 & 14 & 19 & 23 & 24 & 28 & 29 & 37 & 39 & 75 & 76 &  &  &  &  &  &  &  &  &  &  &  &  &  &  \\
56: & 6 & 10 & 13 & 17 & 22 & 23 & 25 & 29 & 37 & 38 & 75 & 76 &  &  &  &  &  &  &  &  &  &  &  &  &  &  \\
57: & 9 & 12 & 14 & 15 & 21 & 24 & 26 & 28 & 35 & 39 & 75 & 76 &  &  &  &  &  &  &  &  &  &  &  &  &  &  \\
58: & 7 & 11 & 12 & 18 & 20 & 21 & 26 & 27 & 35 & 36 & 75 & 76 &  &  &  &  &  &  &  &  &  &  &  &  &  &  \\
59: & 5 & 11 & 13 & 16 & 20 & 22 & 25 & 27 & 36 & 38 & 75 & 76 &  &  &  &  &  &  &  &  &  &  &  &  &  &  \\
60: & 0 & 2 & 8 & 9 & 15 & 18 & 21 & 24 & 26 & 28 & 34 & 76 &  &  &  &  &  &  &  &  &  &  &  &  &  &  \\
61: & 3 & 4 & 5 & 6 & 17 & 19 & 22 & 23 & 25 & 29 & 31 & 76 &  &  &  &  &  &  &  &  &  &  &  &  &  &  \\
62: & 2 & 4 & 6 & 8 & 15 & 19 & 23 & 24 & 28 & 29 & 30 & 76 &  &  &  &  &  &  &  &  &  &  &  &  &  &  \\
63: & 1 & 3 & 5 & 7 & 16 & 17 & 20 & 22 & 25 & 27 & 33 & 76 &  &  &  &  &  &  &  &  &  &  &  &  &  &  \\
64: & 0 & 1 & 7 & 9 & 16 & 18 & 20 & 21 & 26 & 27 & 32 & 76 &  &  &  &  &  &  &  &  &  &  &  &  &  &  \\
65: & 1 & 3 & 7 & 9 & 10 & 14 & 16 & 17 & 27 & 32 & 33 & 76 &  &  &  &  &  &  &  &  &  &  &  &  &  &  \\
66: & 0 & 1 & 8 & 9 & 10 & 13 & 16 & 18 & 26 & 32 & 34 & 76 &  &  &  &  &  &  &  &  &  &  &  &  &  &  \\
67: & 2 & 4 & 5 & 6 & 11 & 12 & 15 & 19 & 29 & 30 & 31 & 76 &  &  &  &  &  &  &  &  &  &  &  &  &  &  \\
68: & 0 & 2 & 6 & 8 & 11 & 13 & 15 & 18 & 28 & 30 & 34 & 76 &  &  &  &  &  &  &  &  &  &  &  &  &  &  \\
69: & 3 & 4 & 5 & 7 & 12 & 14 & 17 & 19 & 25 & 31 & 33 & 76 &  &  &  &  &  &  &  &  &  &  &  &  &  &  \\
70: & 0 & 1 & 2 & 3 & 4 & 6 & 8 & 10 & 13 & 16 & 17 & 22 & 23 & 76 &  &  &  &  &  &  &  &  &  &  &  &  \\
71: & 0 & 1 & 2 & 3 & 4 & 8 & 9 & 10 & 14 & 17 & 19 & 23 & 24 & 76 &  &  &  &  &  &  &  &  &  &  &  &  \\
72: & 0 & 1 & 2 & 3 & 4 & 5 & 7 & 11 & 12 & 15 & 18 & 20 & 21 & 76 &  &  &  &  &  &  &  &  &  &  &  &  \\
73: & 0 & 1 & 2 & 3 & 4 & 7 & 9 & 12 & 14 & 15 & 19 & 21 & 24 & 76 &  &  &  &  &  &  &  &  &  &  &  &  \\
74: & 0 & 1 & 2 & 3 & 4 & 5 & 6 & 11 & 13 & 16 & 18 & 20 & 22 & 76 &  &  &  &  &  &  &  &  &  &  &  &  \\
75: & 0 & 1 & 2 & 3 & 4 & 40 & 41 & 42 & 43 & 44 & 45 & 46 & 47 & 48 & 49 & 50 & 51 & 52 & 53 & 54 & 55 & 56 & 57 & 58 & 59 &  \\
76: & 40 & 41 & 42 & 43 & 44 & 45 & 46 & 47 & 48 & 49 & 50 & 51 & 52 & 53 & 54 & 55 & 56 & 57 & 58 & 59 & 60 & 61 & 62 & 63 & 64 & 65 \\
 & 66 & 67 & 68 & 69 & 70 & 71 & 72 & 73 & 74 &  &  &  &  &  &  &  &  &  &  &  &  &  &  &  &  &  \\
\end{longtable}
\end{center}

\end{document}